\newtheorem*{thm*}{Theorem}
\newtheorem{thm}{Theorem}
\newtheorem{lem}[thm]{Lemma}
\newcommand{\NN}{\mathbb{N}}
\newcommand{\ZZ}{\mathbb{Z}}
\newcommand{\dd}[1]{\mathop{\mathrm{d}#1}}
\newcommand{\inv}{\operatorname{inv}}
\DeclarePairedDelimiter\parentheses{\lparen}{\rparen}
\DeclarePairedDelimiter\braces{\lbrace}{\rbrace}
\DeclarePairedDelimiter\abs{\lvert}{\rvert}
\DeclarePairedDelimiter\gen{\langle}{\rangle}
\NewDocumentCommand\set{ s o m o }{%
	\IfBooleanTF{#1}{\IfNoValueTF{#4}{\braces*{#3}}{\braces*{\,#3:#4\,}}}{%
	\IfNoValueTF{#2}{\IfNoValueTF{#4}{\braces{#3}}{\braces{\,#3:#4\,}}}{%
	\IfNoValueTF{#4}{\braces[#2]{#3}}{\braces[#2]{\,#3:#4\,}}}}%
}
\crefname{section}{§}{§§}
\crefname{figure}{Figure}{Figures}
\numberwithin{equation}{section}
\title[Farey fraction spin chain]{Remark on the Farey fraction spin chain}
\date{\today{}}
\subjclass[2020]{
	Primary
	11N37; 
	Secondary
	11C20, 
	11P21.
}
\keywords{Farey fraction, spin chain, divisor function, asymptotics}
\author{Marc~Technau}
\address{%
	Marc~Technau\\%
	Institut für Analysis und Zahlentheorie\\%
	TU~Graz\\%
	Kopernikusgasse~24/II\\%
	8010~Graz\\%
	Austria}
\email{mtechnau@math.tugraz.at}
\begin{document}
\begin{abstract}
	In \citeyear{KlebanOzluk}, Kleban and Özlük introduced a `Farey fraction spin chain' and made a conjecture regarding its asymptotic number of states with given energy, the latter being given (up to some normalisation) by the number $\Phi(N)$ of $2\times2$ matrices arising as products of $(\!\begin{smallmatrix} 1 & 0 \\ 1 & 1 \end{smallmatrix}\!)$ and $(\!\begin{smallmatrix} 1 & 1 \\ 0 & 1 \end{smallmatrix}\!)$ whose trace equals $N$.
	Although their conjecture was disproved by Peter~(\citeyear{peter2001limit-distribution}), quite precise results are known \emph{on average} by works of Kallies--Özlük--Peter--Snyder~(\citeyear{kallies2001asymptotic-properties}), Boca~(\citeyear{boca2007products-of-matrices}) and Ustinov~(\citeyear{ustinov2013spin-chains}).
	
	We show that the problem of estimating $\Phi(N)$ can be reduced to a problem on divisors of quadratic polynomials which was already solved by Hooley~(\citeyear{hooley1958representation}) in a special case and, quite recently, in full generality by Bykovski{\u{\i}} and Ustinov~(\citeyear{bykovskii2019hooleys-problem}).
	This produces an unconditional estimate for $\Phi(N)$, which hitherto was only (implicitly) known, conditionally on the availability on wide zero-free regions for certain Dirichlet $L$-functions, by the work of Kallies--Özlük--Peter--Snyder.
\end{abstract}
\maketitle

\section{Introduction}

In an effort to introduce further examples pointing towards a connection between the Lee--Yang theory of phase transitions and the Riemann hypothesis, Kleban and Özlük~\cite{KlebanOzluk} have introduced a \emph{`Farey fraction spin chain'}.
Mathematically, their model is concerned with products of the matrices
\[
	A = {\begin{pmatrix} 1 & 0 \\ 1 & 1 \end{pmatrix}}
		\quad\text{and}\quad
	B = {\begin{pmatrix} 1 & 1 \\ 0 & 1 \end{pmatrix}}.
\]
Finite products built from $A$ and $B$ are regarded as \emph{spin states} of that model.
The interested reader is referred to the recent work of Singer~\cite{singer2015positive-limit-fourier-transform} which provides more physical motivation (see, in particular, the references given in §~2 of that paper).
By means of a connection with Farey fractions it was observed in~\cite{KlebanOzluk} that the aforementioned products are unique in the sense that the monoid $\gen{A,B} \subset \mathrm{SL}_2(\ZZ)$ generated by $A$ and $B$ is free over $A$ and $B$ (see also \cite{nathanson2015pairs-of-matrices} for a neat proof of a more general result motivated by the `ping--pong lemma' from geometric group theory).
The \emph{energy} of a spin state $C$ is then given by the logarithm of the trace of the matrix $C$.
It is then natural to ask for the number of spin states with given energy.
This leads one to define
\begin{equation}\label{eq:PhiDef}
	\Phi(N) = \#\set{ C\in \gen{A,B} }[ \operatorname{tr} C = N ],
\end{equation}
which turns out to be finite for $N>2$.
(Mind that this definition differs from the one in~\cite{KlebanOzluk} by a factor of two, but is in accordance with the notation from later works on the topic~\cite{kallies2001asymptotic-properties,peter2001limit-distribution,boca2007products-of-matrices,ustinov2013spin-chains}.)
In~\cite{KlebanOzluk}, the asymptotic formula
\begin{equation}\label{eq:PhiWrongConjecture}
	\Phi(N) \sim c_0 {\mkern 2mu} N \log N, \quad \text{as } N \to \infty,
\end{equation}
with $c_0=1$ was conjectured.
However, Kallies, Özlük, Peter and Synder~\cite{kallies2001asymptotic-properties} later showed that the summatory function $\Psi$ of $\Phi$, given by
\[
	\Psi(N)
	= \sum_{3\leq n \leq N} \Phi(n)
	= \#\set{ C\in \gen{A,B} }[ 3 \leq \operatorname{tr} C \leq N ],
\]
satisfies the asymptotic formula
\begin{equation}\label{eq:Kallies:PsiAsymptotics}
	\Psi(N) = c_1 N^2 \log N + O(N^2 \log \log N),
\end{equation}
with $c_1 = 6/\pi^2 = 1/\zeta(2)$ and $N\geq 3$.
Consequently, if the conjectured asymptotics~\cref{eq:PhiWrongConjecture} were to be correct, then $c_0=2c_1$ instead of $c_0 = 1$.
However, even this modified conjecture was disproved by Peter~\cite{peter2001limit-distribution}, who showed that the normalised quantity $\Phi_*(N) = \Phi(N)/(N\log N)$ admits (with respect to Lebesgue measure) an absolutely continuous, smooth limiting distribution, i.e., there is some function $\delta \in \mathrm{C}^\infty(0,\infty) \cap \mathrm{L}^1(0,\infty)$ such that, for any $0<a<b<\infty$,
\[
	\lim_{N\to\infty} \frac{1}{N} \#\set{ 3 \leq n\leq N }[ \Phi_*(n) \in \lparen a,b\rbrack ]
	= {\int_a^b \delta(t) \dd{t}}.
\]
In particular, $\Phi_*(N)$ does not tend to a limit as $N\to\infty$.
Therefore, indeed, \cref{eq:PhiWrongConjecture} cannot hold for any fixed $c_0$.
\cref{fig:plots} illustrates the rather erratic behaviour of $\Phi_*$.

Subsequently, the focus of investigation has shifted towards the quantity $\Psi(N)$.
Boca~\cite{boca2007products-of-matrices} obtained the two-term asymptotic formula
\begin{equation}\label{eq:PsiAsymptoticsBoca}
	\Psi(N) = c_1 N^2 \log N + c_2 N^2 + O_\epsilon(N^{7/4+\epsilon})
\end{equation}
with\footnote{%
	The value of the constant $c_2$ is misprinted in~\cite{boca2007products-of-matrices}.
	This is corrected in~\cite{ustinov2013spin-chains}.%
}
\[
	c_2 = \frac{1}{\zeta(2)} \parentheses*{ \gamma - \frac{3}{2} - \frac{\zeta'(2)}{\zeta(2)}},
\]
where $\gamma$ denotes the Euler--Mascheroni constant and $\epsilon>0$ is arbitrary.

\begin{figure}%
	\centering%
	\includegraphics{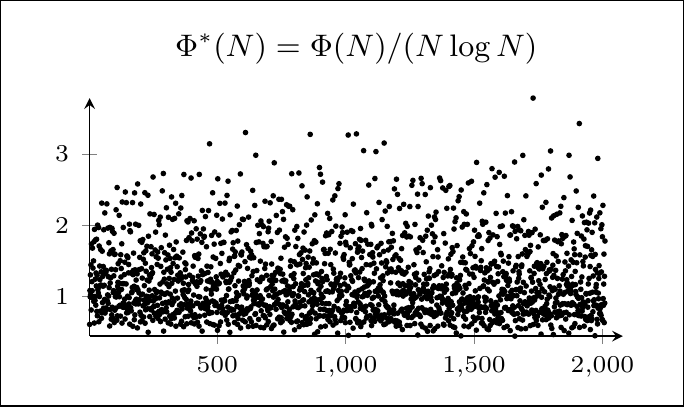}\hspace{3mm}
	\includegraphics{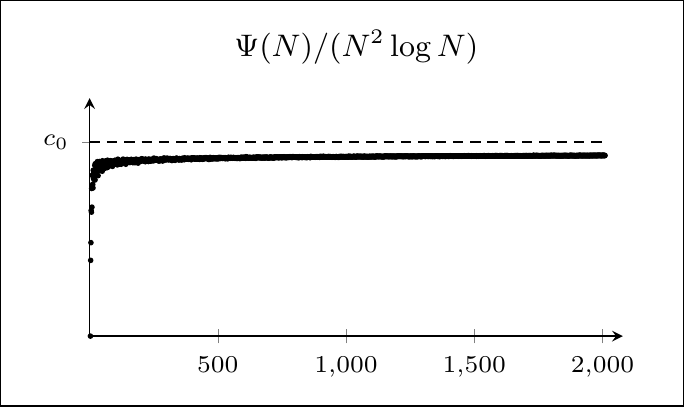}
	\caption{Plots of normalised versions of $\Phi(N)$ and $\Psi(N)$.}%
	\label{fig:plots}%
\end{figure}

The argument in~\cite{kallies2001asymptotic-properties} is quite involved and draws on a connection between matrices in $\gen{A,B}$ and arithmetic in real quadratic fields.
The latter problem could then be addressed by employing $L$-function techniques and a result of Faivre~\cite{faivre1992distribution-levy-const} which was obtained by means of dynamical methods, operator theory and a Tauberian theorem.
On the other hand, Boca's result~\cref{eq:PsiAsymptoticsBoca} was obtained by means of a reducing to a lattice point counting problem, involving modular hyperbolas, whose resolution, in turn, rests upon an application of Weil's bound for Kloosterman sums.
The error term in~\cref{eq:PsiAsymptoticsBoca} was later sharpened by Ustinov~\cite{ustinov2013spin-chains} to $O(N^{3/2}(\log N)^{4})$, essentially, by cleverly injecting a more efficient use of Fourier analysis into the lattice point counting arguments from~\cite{boca2007products-of-matrices} (see also~\cite{ustinov2015points-of}).

\medskip

In the present note, we exploit a formula for $\Phi(N)$ already observed in~\cite[Theorem~4]{KlebanOzluk} (\cref{lem:KlebanOzluk} below) to connect $\Phi(N)$ with classical work of Hooley~\cite{hooley1958representation} concerning the sum
\begin{equation}\label{eq:HooleysProblem}
	\sum_{ 0 \leq \abs{m} < \sqrt{Y} } d(Y - m^2),
\end{equation}
where $d(n)$ denotes the number of positive divisors of $n$.
Hooley's result then immediately yields an `asymptotic formula' for $\Phi(N)$ for \emph{even} $N$.
(The term `asymptotic formula' is in quotes here, because various arithmetic quantities enter the main term.)
Although a careful inspection of Hooley's argument would also furnish such a formula for \emph{odd} $N$ as well, Bykovski{\u{\i}} and Ustinov~\cite{bykovskii2019hooleys-problem} have recently derived a suitable generalisation (and strengthening) of Hooley's result to sums of the shape~\cref{eq:Upsilon} given below.
Their proof uses their result for the reader's convenience.
This requires some notation.
To this end, let $X$ be a positive integer and consider the quantity
\begin{equation}\label{eq:Upsilon}
	\Upsilon(X)
	= \sum_{\substack{ 0 \leq \abs{m} < \sqrt{X} \\ m^2 \equiv X \bmod 4 }} d\parentheses*{\frac{X-m^2}{4}}
	= \#\set{ (\lambda,\mu,m)\in\NN^2\times\ZZ }[ 4\lambda\mu + m^2 = X ].
\end{equation}
Clearly $\Upsilon(X)$ is only non-zero for $X\equiv 0,1\bmod 4$, so we may as well restrict our discussion to those $X$.
(For $X\equiv 0\bmod 4$ the above sum reduces to the sum~\cref{eq:HooleysProblem} studied by Hooley~\cite{hooley1958representation}.)

Recall that any non-zero integer $X\equiv 0,1\bmod 4$ can be written uniquely in the form $X = D r^2$ with some fundamental discriminant $D$ and positive integer $r$.
Consider the Dirichlet $L$-function $L_D$ associated to the Kronecker symbol $\parentheses[\big]{\frac{D}{\cdot}}$.
Moreover, for the above $r$ and $D$, as well as complex $s$, put
\[
	\eta_s(r;D) = \sum_{\ell \mid r} \parentheses*{\frac{D}{\ell}} \frac{\mu(\ell)}{\ell^{2s}} \sum_{d\mid (r/\ell)} d^{1-4s}.
\]
Finally, let $\eta_s'(r;D)$ denote the derivative of $\eta_s(r;D)$ with respect to $s$, and let
\[
	A(X) = \frac{2}{\zeta(2)} X^{1/2} L_D(1) \eta_{1/2}(r;D) \parentheses*{
		\log X
		+ 2 \frac{L_D'(1)}{L_D(1)}
		+ \frac{\eta_{1/2}'(r;D)}{\eta_{1/2}(r;D)}
		+ c_3
	},
\]
where $c_3$ is a certain explicitly given constant, the formula for which we do not reproduce here (see~\cite{bykovskii2019hooleys-problem} for details).
\begin{thm}[Bykovski{\u{\i}} and Ustinov]\label{thm:BykovskiiUstinov}
	There is some positive $\delta$ such that
	\[
		\Upsilon(X) = A(X) + O\parentheses{ X^{1/2-\delta} },
	\]
	as $X\geq 1$ tends to infinity within the residue classes $0, {\mkern 1mu} 1 \bmod 4$.
\end{thm}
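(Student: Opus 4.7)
The plan is to reduce~\cref{eq:Upsilon} to a character-sum problem via Dirichlet's hyperbola method, evaluate the leading contribution using the Dirichlet $L$-function $L_D$ attached to the fundamental discriminant $D$, and control the remainder using uniform bounds on incomplete quadratic congruence sums---ultimately via Weil's bound for Kloosterman sums.

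First I would write $\Upsilon(X) = \sum_m d((X-m^2)/4)$ with $m$ ranging over the admissible set, expand $d(k) = 2\#\set{e\mid k}[e\leq\sqrt{k}] - [\sqrt{k}\in\ZZ]$, and swap the order of summation.  This expresses $\Upsilon(X)$, up to a harmless contribution from the perfect-square term, in terms of the number of triples $(d,m,e)$ with $4de = X-m^2$, $e\geq d$ and $\abs{m}<\sqrt{X}$.  Counting in residues modulo $4d$ then reduces the problem to estimating, for each $d\leq\tfrac{1}{2}\sqrt{X}$, the number of $m$ in a prescribed interval satisfying $m^2 \equiv X \pmod{4d}$.

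Second, the expected count at level $d$ is $\rho_X(4d)\sqrt{X-4d^2}/(2d)$, where $\rho_X(q) = \#\set{m\bmod q}[m^2\equiv X \bmod q]$.  The function $\rho_X$ is multiplicative in $q$, and its Dirichlet series factors (essentially) as $\zeta(s)L_D(s)/\zeta(2s)$ times an explicit local correction at the primes dividing the square factor $r$ in the decomposition $X=Dr^2$; that correction is precisely the Dirichlet polynomial $\eta_s(r;D)$ from the statement.  A Perron-type contour shift against the geometric factor $\sqrt{X-4d^2}/d$ picks up the double pole of the integrand at $s=1$ (one from $\zeta(s)$, one from the Mellin transform of the geometric factor), reproducing the $X^{1/2}\log X$ leading term together with the constants $L_D'(1)/L_D(1)$, $\eta_{1/2}'(r;D)/\eta_{1/2}(r;D)$, and $c_3$ appearing in $A(X)$.

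The main obstacle will be the error term, i.e.\ the aggregate over $d$ of the deviation
\[
	\sum_{\substack{\abs{m}<\sqrt{X-4d^2} \\ m^2\equiv X\bmod 4d}} 1 \;-\; \frac{\rho_X(4d)\sqrt{X-4d^2}}{2d}.
\]
After completion via additive characters modulo $4d$ this becomes a short Kloosterman-type sum, and Weil's bound yields a saving of order $d^{-1/2+\epsilon}$ per level, which is ample for $d\leq X^{1/2-\eta}$.  The delicate regime $d\asymp\sqrt{X}$ is where the Bykovski{\u{\i}}--Ustinov refinement is essential: a Cauchy--Schwarz redistribution together with a careful combinatorial analysis of the congruence classes modulo $r$ is needed in order to obtain a saving $X^{1/2-\delta}$ that is uniform in the size of the square factor $r$, which is precisely the obstacle preventing a direct transplant of Hooley's original argument.
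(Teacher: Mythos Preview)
The paper does not prove \cref{thm:BykovskiiUstinov}; it is merely stated, with attribution, as a known result from~\cite{bykovskii2019hooleys-problem} (with Hooley's earlier work~\cite{hooley1958representation} covering the special case $X\equiv 0\bmod 4$). Consequently there is no argument in the paper against which to compare your proposal.

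As a sketch of the underlying method your outline is broadly consonant with the Hooley tradition: a hyperbola-type reduction to counting roots of $m^2\equiv X$ in residue classes, identification of the main term via the factorisation of the Dirichlet series attached to $\rho_X$ (which is indeed where $L_D$ and the local correction $\eta_s(r;D)$ at the primes dividing $r$ enter), and error-term control through Weil-bounded exponential sums. That said, your final paragraph is vague precisely where the substance lies: you assert that a ``Cauchy--Schwarz redistribution together with a careful combinatorial analysis'' secures uniformity in the square factor $r$, but you give no indication of what is being redistributed or why Cauchy--Schwarz is the relevant tool, and this uniformity is exactly the new input of~\cite{bykovskii2019hooleys-problem} over~\cite{hooley1958representation}. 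If you intend to supply a proof rather than cite the result, that step must be made concrete; otherwise, the honest course is simply to invoke~\cite{bykovskii2019hooleys-problem}, as the paper does.
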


\section{Main result}

The main result of the present note shows that $\Phi$ defined in~\cref{eq:PhiDef} is linked to $\Upsilon$ defined in~\cref{eq:Upsilon}.
Precisely, we have the following:

\begin{thm}\label{thm:Main}
	$\Phi(N) = \Upsilon(N^2-4)$ for $N\geq 3$.
\end{thm}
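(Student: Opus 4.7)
My plan is to exhibit an explicit bijection between matrices $C = \begin{pmatrix} a & b \\ c & d \end{pmatrix} \in \langle A, B \rangle$ of trace $N$ and triples $(\lambda, \mu, m) \in \mathbb{N}^2 \times \mathbb{Z}$ with $4\lambda\mu + m^2 = N^2 - 4$. The starting point is \cref{lem:KlebanOzluk}, which I expect to identify $\langle A, B \rangle$ with the set of $\mathrm{SL}_2(\mathbb{Z})$-matrices having nonnegative entries. Since $N \geq 3$ rules out matrices with a zero entry (those have trace $2$, being powers of $A$ or $B$), computing $\Phi(N)$ amounts to counting quadruples $(a, b, c, d) \in \mathbb{N}^4$ with $a + d = N$ and $ad - bc = 1$.

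The candidate bijection is then $(a, b, c, d) \mapsto (b, c, a - d) =: (\lambda, \mu, m)$. That the image satisfies the defining equation of $\Upsilon(N^2 - 4)$ is a one-line computation:
\[
	4\lambda\mu + m^2 = 4(ad - 1) + (a + d)^2 - 4ad = N^2 - 4.
\]
The inverse is $b = \lambda$, $c = \mu$, $a = (N+m)/2$, $d = (N-m)/2$; the two checks needed are that $a, d$ are positive integers. Integrality follows from $m^2 \equiv N^2 \pmod{4}$ (forced by $4 \mid N^2 - 4 - m^2$), so $m \equiv N \pmod{2}$; positivity follows from $\lambda\mu \geq 1$, whence $m^2 \leq N^2 - 8 < N^2$ and $\lvert m \rvert < N$. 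A further short computation confirms $ad - bc = 1$ for the reconstructed quadruple.

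The only nontrivial ingredient is \cref{lem:KlebanOzluk}: it guarantees both that $\Phi(N)$ is given by the quadruple-count above, and that the matrix reconstructed from a triple $(\lambda, \mu, m)$ indeed lies in the monoid $\langle A, B \rangle$ rather than merely in $\mathrm{SL}_2(\mathbb{Z})$. Granted that lemma, the proof reduces to the bookkeeping just sketched and I do not foresee any further obstacle.
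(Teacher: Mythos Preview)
Your argument is correct, and it is genuinely more direct than the route taken in the paper. The paper starts from \cref{lem:KlebanOzluk} in the form of the restricted divisor identity
\[
\Phi(N) = 2\sum_{1\le n<N} d_n(nN-n^2-1),
\]
substitutes $n=\nu+N/2$ to reach $\Phi(N)=2\Upsilon_<(N^2-4)$ with $\Upsilon_<$ as in~\cref{eq:UpsilonWithCutOff}, and then proves a separate symmetry lemma (\cref{lem:KeyLemma}) showing $2\Upsilon_<(X)=\Upsilon(X)$ for non-square $X$ via the involution $(\lambda,\mu,m)\mapsto(\mu,\lambda,m)$. Your bijection $(a,b,c,d)\mapsto(b,c,a-d)$ collapses all three steps into one; in effect, the paper's involution is the image of matrix transposition under your map, so the combinatorics is the same but your packaging avoids the detour through restricted divisor sums. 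The paper's approach has the mild advantage that \cref{lem:KeyLemma} also handles the case where $X$ is a perfect square, which is irrelevant for $X=N^2-4$.

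One caveat: what you actually need as input is not \cref{lem:KlebanOzluk} as stated in the paper (that lemma \emph{is} the divisor-sum formula), but rather the characterisation of $\langle A,B\rangle$ as the set of $\mathrm{SL}_2(\ZZ)$-matrices with nonnegative entries. This fact is well known and underlies the proof of \cref{lem:KlebanOzluk}, but you should cite it separately (e.g.\ \cite{KlebanOzluk} or \cite{nathanson2015pairs-of-matrices}) rather than invoking the lemma itself.
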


Some precursors of the formula in \cref{thm:Main} can already be found in~\cite{KlebanOzluk} and~\cite{kallies2001asymptotic-properties}, so we try to give the reader a sense of what is new here and what is not.

\medskip

First, in~\cite[Theorem~4]{KlebanOzluk} a connection between $\Phi(N)$ and a restricted divisor sum is proved.
We state this as \cref{lem:KlebanOzluk} below and the authors of~\cite{KlebanOzluk} already point the reader towards the large existing literature on sums with the divisor function.

In~\cite[§~3]{kallies2001asymptotic-properties}, the authors study the arithmetic function $\rho_m\colon\NN\to\NN_0$ given by
\begin{equation}\label{eq:RhoSum}
	\rho_m(a) = \#\set{ b\in\ZZ }[
		0\leq b < 2a, \,
		b^2 \equiv m \bmod 4a, \,
		\gcd(a,b,c)=1, \,
		4ac = b^2 - m
	].
\end{equation}
This function is shown to be multiplicative and, by means of decomposing the associated Dirichlet series into $L$-functions, they prove a formula for
\[
	\sum_{a\leq x} \frac{\rho_\Delta(a)}{a},
\]
where $\Delta>0$ is a non-square (not necessarily fundamental) discriminant and $\Delta/4 \leq x^2 \leq \Delta$, subject to the assumption that a certain Dirichlet $L$-function associated with $\Delta$ admits a sufficiently wide zero-free region (see~\cite[Proposition~3.13]{kallies2001asymptotic-properties}).
The main result of~\cite{kallies2001asymptotic-properties}, the asymptotic formula~\cref{eq:PsiAsymptoticsBoca} stated above, is then obtained by drawing a connection between $\Psi(N)$ and averages of~\cref{eq:RhoSum} and using the aforementioned formula for~\cref{eq:RhoSum} in connexion with zero-density results which, on average, supply the required zero-free regions.

At this point, it is worth pointing out that Hooley~\cite{hooley1958representation} already successfully dealt with sums very similar to~\cref{eq:RhoSum} in~1958 without an appeal to any conjectural zero-free regions for $L$-functions.
This suggested that a successful estimation of $\Phi(N)$ subject to no hitherto unproven hypotheses should be within reach, and \cref{thm:Main} in combination with \cref{thm:BykovskiiUstinov} confirms that this is, indeed, the case.

\medskip

Although for the purpose of analysing $\Psi(N)$, the approach via zero-density methods seems equally capable, it stands to hope that \cref{thm:Main}, by means of combination with \cref{thm:BykovskiiUstinov}, may lead to further insights into the behaviour of $\Phi(N)$ in future works.

\section{Proof of \texorpdfstring{\cref{thm:Main}}{Theorem\autoref{thm:Main}}}

Let $d_n(x) = \#\set{ k \mid x }[ k<n  ]$ count the positive divisors of $x$ that are strictly less than $n$.
Our point of departure shall be the following lemma:
\begin{lem}[Kleban--Özlük]\label{lem:KlebanOzluk}
	\(\displaystyle
		\Phi(N) = 2 \sum_{ 1\leq n < N} d_n( nN - n^2 - 1 )
	\)
	for $N\geq 3$.
\end{lem}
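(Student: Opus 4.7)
The plan is to identify $\gen{A,B}$ concretely, derive a divisor-sum formula for $\Phi(N)$, and then rewrite it into the claimed form via an elementary involution.

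First, I would establish that $\gen{A, B}$ coincides with the set of $\mathrm{SL}_2(\ZZ)$-matrices having non-negative entries. The inclusion ``$\subseteq$'' is immediate. For the converse, given such $M = (\!\begin{smallmatrix}a & b \\ c & d\end{smallmatrix}\!)$ with $M \neq I$, one uses $\det M = 1$ to rule out the ``crossed'' configurations $a > b, c < d$ and $a < b, c > d$: in the former, $a \geq b+1$ and $d \geq c+1$ force $\det M \geq b + c + 1$, which exceeds $1$ unless $b = c = 0$ (giving $M = I$); the latter is ruled out symmetrically. Hence exactly one of ``$a \geq b$ and $c \geq d$'' or ``$b \geq a$ and $d \geq c$'' holds, so that $MA^{-1}$ or $MB^{-1}$ respectively is again a non-negative $\mathrm{SL}_2(\ZZ)$-matrix with strictly smaller entry sum. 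Iterating this ``peel-off'' reduces $M$ to $I$ and displays $M$ as a product of $A$s and $B$s.

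Granted this, the trace-$N$ matrices (for $N \geq 3$) are precisely those satisfying $a + d = N$, $bc = a(N-a) - 1$, and $a, b, c, d \geq 1$ (the positivity of $b$ and $c$ following from $bc \geq 1$). Counting admissible $(b, c)$ for each $a$ yields the preliminary formula
\[
	\Phi(N) = \sum_{a = 1}^{N - 1} d(M_a), \qquad M_a := a(N-a) - 1.
\]

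To convert this into the form stated in the lemma it suffices to show $\sum_{a} d(M_a) = 2\sum_{a} d_a(M_a)$, with both sums over $1 \leq a \leq N-1$. On the set $S = \set{(a, k)}[ 1 \leq a \leq N-1,\ k\in\NN,\ k \mid M_a ]$ I would define $\psi(a, k) = (N - a,\ M_a/k)$; since $M_{N-a} = M_a$, this is a well-defined involution. A short calculation gives the equivalence $k \geq a \iff M_a/k < N - a$, so $\psi$ swaps the subsets $\set{(a, k)\in S}[k \geq a]$ and $\set{(a, k)\in S}[k < a]$. A fixed point would force $a = N/2$ and $(a-k)(a+k) = 1$ in positive integers, which has no solution; hence $\psi$ restricts to a bijection between the two halves, and the identity follows.

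The main obstacle is the first step — the peel-off argument underlying the concrete identification of $\gen{A, B}$ — but this is essentially the standard proof that $\gen{A, B}$ is a free monoid, alluded to in the introduction. The subsequent enumeration and bijective manipulation are then routine.
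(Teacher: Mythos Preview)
Your argument is correct. The identification of $\gen{A,B}$ with the non-negative matrices in $\mathrm{SL}_2(\ZZ)$ is standard (and your peel-off sketch is fine; the ``exactly one'' dichotomy does hold for $M\neq I$, since equality in both pairs would force $\det M=0$), the direct count $\Phi(N)=\sum_{a=1}^{N-1}d(M_a)$ is clean, and the involution $\psi(a,k)=(N-a,M_a/k)$ with the equivalence $k\geq a\iff M_a/k<N-a$ (via $(a-k)(N-a)<1$) is a nice way to pass to the restricted divisor count.

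The paper takes a different route: it simply cites the result from Kleban--\"Ozl\"uk and, for readers wanting details, reconstructs it from Boca's decomposition of $\tfrac12\Phi(N)$ into two sets described by modular-inverse conditions (your~\cref{eq:PhiSystem}), then merges the two terms and checks that the triples $(q,p,t)$ with $q+\inv_p(q)+pt=N$ are in bijection with pairs $(q,p)$ where $p<q$ and $p\mid qN-q^2-1$. So the paper arrives at the restricted divisor sum directly, whereas you first obtain the \emph{unrestricted} sum $\sum_a d(M_a)$ and then halve it by symmetry. Your approach is more self-contained and elementary (no appeal to Boca's structural analysis or the $\inv$ formalism), while the paper's route has the advantage of plugging into the framework already used for the finer results on $\Psi(N)$.
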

\begin{proof}
	This is \cite[Theorem~4]{KlebanOzluk}, keeping in mind that $\Phi(N)$ in that paper is half of the quantity defined in~\cref{eq:PhiDef}.
	As the style in~\cite{KlebanOzluk} is terse, a reader interested in seeing more details may also consult the very neatly arranged arguments in~\cite{boca2007products-of-matrices}.
	There it is shown that, for $N\geq 3$,
	\begin{equation}\label{eq:PhiSystem}
		\tfrac{1}{2} \Phi(N)
		= \#\set*{ (q,q') : \begin{array}{@{} l @{}}
			1\leq q < q' < N,          \\
			\gcd(q,q') = 1,            \\
			q' + \inv_q(q') = N        
		\end{array} }
		+ \#\set*{ (q,p,t) : \begin{array}{@{} l @{}}
			1 \leq p < q < N,          \,
			t \geq 1,                  \\
			\gcd(p,q) = 1,             \\
			q + \inv_p(q) + pt = N     
		\end{array} },
	\end{equation}
	where, for coprime $x$ and $m$, $\inv_m(x)$ denotes the unique integer $y \in \set{ 1,\ldots,m }$ such that $xy\equiv 1 \bmod m$ (see~\cite[§~2 and the start of §§~3--4 respectively]{boca2007products-of-matrices}; actually, the quantity considered there is $\Psi(N)$, but the necessary changes are only the obvious ones).
	The first term on the right hand side of~\cref{eq:PhiSystem} can be removed if the condition `$t\geq 1$' is replaced by `$t\geq 0$'.
	Suppose that this has been done.
	For $(q,p,t)$ counted by the modified version of the right hand side of~\cref{eq:PhiSystem}, $p$ divides $N - q - \inv_p(q)$.
	In particular, $p$ divides $qN - q^2 - 1$.
	Conversely, for any $1\leq q < N$, every positive divisor $p$ of $qN - q^2 - 1$ is certainly coprime to $q$, and any such divisor with $p<q$ gives rise to some triple $(q,p,t)$ as above by setting $t = (qN - q^2 - \inv_p(q))/p$.
	This proves the lemma.
\end{proof}

Suppose that $N\geq 3$ is some integer.
A linear change in variables in \cref{lem:KlebanOzluk} shows that
\[
	\Phi(N) = 2 \sum_\nu d_{\nu+N/2} \parentheses[\bigg]{\frac{N^2 - 4 - (2\nu)^2}{4}},
\]
where $\nu$ ranges over either integers or $\text{integers}+1/2$ according to whether $N$ is even or odd and such that the numerator in the above fraction is positive.
Consequently, in either case, $m=2\nu$ ranges over the integers such that $m^2 < N^2 - 4$ and $N^2 - 4 \equiv m^2 \bmod 4$.
Hence,
\[
	\Phi(N) = 2 \sum_{\substack{ 0 \leq m^2 < N^2 - 4 \\ m^2 \equiv N^2 \bmod 4 }} d_{(m+N)/2} \parentheses[\bigg]{\frac{N^2 - 4 - m^2}{4}},
\]
We now contend that in the above we may replace $N$ in the subscript of $d$ by $\sqrt{N^2-4}$ without affecting the value of the expression.
Indeed, the error introduced by the above change is precisely twice the number
\[
	\#\set*{ (\lambda,\mu,m) \in\NN^2\times\ZZ }[ {\begin{array}{@{} l @{}}
		m^2 < N^2 - 4,                  \\
		4 \lambda \mu = N^2 - 4 - m^2,  \\
		\sqrt{N^2-4} \leq \lambda-m < N
	\end{array}} ].
\]
However, the last inequality cannot be satisfied for $N\geq 3$, so the above expression is zero.

The above discussion shows that
\begin{equation}\label{eq:PhiRelationToUpsilon}
	\Phi(N) = 2\Upsilon_<(N^2-4),
\end{equation}
where
\begin{equation}\label{eq:UpsilonWithCutOff}
	\Upsilon_<(X) = \sum_{\substack{ 0 \leq \abs{m} < \sqrt{X} \\ m^2 \equiv X \bmod 4 }} d_{(m+\sqrt{X})/2}\parentheses[\bigg]{\frac{X-m^2}{4}}.
\end{equation}
The \emph{proof of \cref{thm:Main}} now follows immediately from~\cref{eq:PhiRelationToUpsilon} and the next lemma:
\begin{lem}\label{lem:KeyLemma}
	$\Upsilon(X) - 2 \Upsilon_<(X) = 0$ if $X$ is not a square and $=\sqrt{X}-1$ otherwise.
\end{lem}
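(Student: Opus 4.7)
My plan is to prove the lemma by an involution argument on the set
\[
S(X) = \{(\lambda,\mu,m)\in\NN^2\times\ZZ : 4\lambda\mu+m^2 = X\},
\]
whose cardinality is $\Upsilon(X)$ (by the second equality in~\cref{eq:Upsilon}). The involution I have in mind is
\[
\iota\colon (\lambda,\mu,m) \longmapsto (\mu,\lambda,-m),
\]
which visibly preserves the equation $4\lambda\mu+m^2 = X$ and hence permutes $S(X)$.

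The computational heart of the argument is the factorisation $(2\lambda)(2\mu) = X-m^2 = (\sqrt{X}-m)(\sqrt{X}+m)$. Since $4\lambda\mu > 0$ forces $|m| < \sqrt{X}$, both factors on the right are positive, so one sees at once that $2\lambda < \sqrt{X}+m$ if and only if $2\mu > \sqrt{X}-m$. This dichotomy lets me classify each triple in $S(X)$ into one of three disjoint types: (A)~$2\lambda < \sqrt{X}+m$; (B)~$2\lambda > \sqrt{X}+m$; (C)~$2\lambda = \sqrt{X}+m$. Crucially, $\iota$ interchanges types (A) and (B) bijectively and maps type (C) to itself. Unpacking~\cref{eq:UpsilonWithCutOff}, $\Upsilon_<(X)$ is precisely the number of type~(A) triples in $S(X)$, so combining $\#A = \#B$ (via $\iota$) with $\Upsilon(X) = \#A + \#B + \#C$ yields
\[
\Upsilon(X) - 2\Upsilon_<(X) = \#C.
\]

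It then remains to enumerate the type (C) triples. The equation $2\lambda = \sqrt{X}+m$ can only be satisfied when $\sqrt{X}$ is a positive integer~$k$, so $\#C = 0$ whenever $X$ is not a perfect square. When $X = k^2$, type (C) triples are parametrised by those $m\in\ZZ$ for which $\lambda = (k+m)/2 \geq 1$ and $\mu = (k-m)/2 \geq 1$ are both integers, i.e., $m \equiv k \pmod 2$ and $|m| \leq k-2$; a direct count gives $k-1 = \sqrt{X}-1$ such values of~$m$.

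The argument is essentially elementary; the one point requiring some care is verifying that the involution correctly interchanges types~(A) and~(B) — in particular, that the strict inequality $\lambda < (m+\sqrt{X})/2$ in the definition of $\Upsilon_<$ aligns exactly with the inequality defining type (A), and that the borderline case of equality (type (C), which includes the fixed points $(\lambda,\lambda,0)$ of $\iota$ when $X = (2\lambda)^2$) is not spuriously counted by $\Upsilon_<$. Once this is checked, the rest is bookkeeping.
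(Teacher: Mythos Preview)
Your proof is correct and rests on the same underlying symmetries as the paper's, but your packaging is tidier. The paper first splits off the $m=0$ case, then restricts to $m>0$ (using the $m\mapsto -m$ symmetry to double), and within the $m>0$ part partitions according to whether $2\lambda$ lies in $(-\infty,\sqrt{X}-m)$, $[\sqrt{X}-m,\sqrt{X}+m]$, or $(\sqrt{X}+m,\infty)$, matching the outer two pieces via the swap $(\lambda,\mu,m)\mapsto(\mu,\lambda,m)$. You instead compose the two symmetries into the single involution $\iota\colon(\lambda,\mu,m)\mapsto(\mu,\lambda,-m)$ and partition the \emph{entire} set $S(X)$ by the single inequality $2\lambda \lessgtr \sqrt{X}+m$; this avoids the separate treatment of $m=0$ and yields $\Upsilon(X)-2\Upsilon_<(X)=\#C$ in one stroke. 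The payoff is a shorter and more uniform argument, with the square case falling out directly as the enumeration of the boundary type~(C) rather than requiring the ad~hoc corrections the paper mentions. Substantively, though, both proofs exploit the same factorisation $(2\lambda)(2\mu)=(\sqrt{X}-m)(\sqrt{X}+m)$ and the same pair of symmetries.
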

\begin{proof}\newcommand{\cL}{\ensuremath{\mathscr{L}}}%
	Observe that $\Upsilon(X)$ counts precisely the number of elements in the set
	\[
		\cL = \set{ (\lambda,\mu,m) \in \NN^2 \times \ZZ }[ 4\lambda\mu = X-m^2 ].
	\]
	Suppose first that $X$ is \emph{not} a square.
	(In fact, this is the only case we require for the proof of \cref{thm:Main}.)
	Let $\cL_0 = \cL \cap (\NN^2 \times \set{0})$ denote the (possibly empty) set of triples $(\lambda,\mu,m) \in \cL$ with $m=0$.
	As $X$ is not a square, the set $\cL_0^< = \set{ (\lambda,\mu,0) \in \cL_0 }[ 2\lambda < \sqrt{X} ]$ contains exactly half the number of elements of $\cL_0$.
	Next, we partition $\cL_+ = \cL \cap \NN^3$ into three subsets, $\cL_1$, $\cL_2$ and $\cL_3$ (say), according to which of the following intervals $2\lambda$ belongs to:
	\[
		(-\infty, -m + \sqrt{X}), \quad
		[-m + \sqrt{X}, m + \sqrt{X}], \quad
		(m + \sqrt{X}, +\infty).
	\]
	It is easily seen that the map sending $(\lambda,\mu,m)$ to $(\mu,\lambda,m)$ induces a bijection from $\cL_1$ onto $\cL_3$.
	Therefore,
	\begin{equation}\label{eq:UpsilonDoubleCounting}
		\begin{aligned}
			\Upsilon(X) = \#\cL &
			= \#\cL_0 + 2 \, \#\cL_+ \\ &
			= \#\cL_0 + 2 \, \#\cL_1 + 2 \, \#\cL_2 + 2 \, \#\cL_3 \\ &
			= 2 \, \#\cL_0^< + 4 \, \#\cL_1 + 2 \, \#\cL_2.
		\end{aligned}
	\end{equation}
	However, by splitting according to the sign of $m$, we also find that
	\begin{equation}\label{eq:UpsilonCounting}
		\Upsilon_< = \#\cL_0^< + 2 \, \#\cL_1 + \#\cL_2.
	\end{equation}
	This shows $\Upsilon(X) = 2 \Upsilon_<(X)$ if $X$ is not a square, as claimed.
	
	On the other hand, if $X$ \emph{is} a square, then the above argument requires some minor corrections.
	First, $\cL_0$ is either empty (in which case $\#\cL_0 = 2 \, \#\cL_0^<$ for trivial reasons) or the presence of the element $(\sqrt{X/4},\sqrt{X/4},0)$ results in $\#\cL_0 = 2 \, \#\cL_0^< - 1$.
	Similarly, while~\cref{eq:UpsilonDoubleCounting} remains valid, the right hand side of~\cref{eq:UpsilonCounting} also counts the triples
	\[
		\parentheses[\big]{ \tfrac{1}{2}(m+\sqrt{X}), \tfrac{1}{2}(-m+\sqrt{X}), m } \in \cL_2,
	\]
	which ought to be discarded again.
	Taking all of this into account, one easily obtains the assertion of the lemma.
\end{proof}

\section*{ Acknowledgements}

The author takes pleasure in thanking the joint
	FWF--ANR
project
	\emph{ArithRand} ({FWF~I~4945-N} and {ANR-20-CE91-0006})
for financial support,
as well as both referees for their helpful suggestions which lead to an improved exposition.


\vfill%
\end{document}